\newcommand{\referenza}{}
\newtheorem{prop}{Proposition}[section]
\newtheorem{thm}[prop]{Theorem}
\newtheorem*{thm*}{Theorem \referenza}
\newtheorem*{thmfrench*}{Th\'eor\`eme \referenza}
\newtheorem{cor}[prop]{Corollary}
\newtheorem{lemma}[prop]{Lemma}
\theoremstyle{definition}
\newtheorem{rem}[prop]{Remark}
\newcommand{\N}{\mathbb{N}}
\newcommand{\Z}{\mathbb{Z}}
\newcommand{\C}{\mathbb{C}}
\DeclareMathOperator{\imm}{im}
\DeclareMathOperator{\coker}{coker}
\DeclareMathOperator{\de}{d}
\newcommand{\del}{\partial}
\newcommand{\delbar}{\overline{\del}}
\title{On non-K\"ahler degrees of complex manifolds}
\author{Daniele Angella}
\address[Daniele Angella]{
Dipartimento di Matematica e Informatica ``Ulisse Dini''\\
Universit\`a degli Studi di Firenze\\
viale Morgagni 67/a\\
50134 Firenze, Italy
}
\email{daniele.angella@gmail.com}
\email{daniele.angella@unifi.it}
\author{Adriano Tomassini}
\address[Adriano Tomassini]{Dipartimento di Scienze Matematiche, Fisiche, ed Informatiche\\
Plesso Matematico e Informatico\\
Universit\`{a} di Parma\\
Parco Area delle Scienze 53/A, 43124\\
Parma, Italy}
\email{adriano.tomassini@unipr.it}
\author{Misha Verbitsky}
\address[Misha Verbitsky]{
Instituto Nacional de Matem\'atica Pura e Aplicada\\ Estrada Dona Castor
ina, 110\\
Jardim Bot\^anico, CEP 22460-320\\
Rio de Janeiro, RJ - Brasil\\
also:\\
Laboratory of Algebraic Geometry, \\
Faculty of Mathematics, National Research University HSE,\\
7 Vavilova Str. Moscow, Russia.
}
\email{verbit@impa.br}
\keywords{cohomology, complex surface, non-K\"ahler}
\thanks{During the preparation of this paper, the first author has been granted with a research fellowship by Istituto Nazionale di Alta Matematica INdAM and supported by the Project PRIN ``Varietà reali e complesse: geometria, topologia e analisi armonica'', by the Project FIRB ``Geometria Differenziale e Teoria Geometrica delle Funzioni'', by the Project SIR 2014 AnHyC ``Analytic aspects in complex and hypercomplex geometry'' (code RBSI14DYEB), and by GNSAGA of INdAM. The second author is supported by the Project PRIN ``Varietà reali e complesse: geometria, topologia e analisi armonica'', by Project FIRB ``Geometria Differenziale Complessa e Dinamica Olomorfa'', and by GNSAGA of INdAM.
The third author is partially supported by the Russian Academic Excellence Project ’5-100’.}
\subjclass[2010]{32Q55, 32C35, 53C55}
\begin{document}

\begin{abstract}
 We study cohomological properties of complex manifolds. In particular, under suitable metric conditions, we extend to higher dimensions a result by A. Teleman, which provides an upper bound for the Bott-Chern cohomology in terms of Betti numbers for compact complex surfaces according to the dichotomy $b_1$ even or odd.
\end{abstract}

\maketitle

\section*{Introduction}

Let $X$ be a compact complex manifold. Consider the double complex $\left( \wedge^{\bullet,\bullet}X,\, \del,\, \delbar \right)$.
Besides de Rham and Dolbeault cohomology, consider the {\em Bott-Chern} \cite{bott-chern} and {\em Aeppli} \cite{aeppli} cohomologies, defined as:
$$ H^{\bullet,\bullet}_{BC}(X) \;:=\; \frac{\ker\del\cap\ker\delbar}{\imm\del\delbar} \qquad \text{ and } \qquad H^{\bullet,\bullet}_{A} \;:=\; \frac{\ker\del\delbar}{\imm\del+\imm\delbar} \;. $$
The Hodge theory developed by M. Schweitzer \cite{schweitzer} assures their finite-dimensionality as $\C$-vector spaces. The identity induces the natural maps
$$ \xymatrix{
 & H^{\bullet,\bullet}_{BC}(X) \ar[ld] \ar[d] \ar[rd] & \\
 H^{\bullet,\bullet}_{\del}(X) \ar[rd] & H^{\bullet}_{dR}(X;\C) \ar[d] & H^{\bullet,\bullet}_{\delbar}(X) \ar[ld] \\
 & H^{\bullet,\bullet}_{A}(X) & \\
} $$
of (bi-)graded vector spaces. One says that $X$ satisfies the {\em $\del\delbar$-Lemma} if the natural map $H^{\bullet,\bullet}_{BC}(X) \to H^{\bullet}_{dR}(X;\C)$ is injective. This turns out to be equivalent to all the above maps being isomorphisms, \cite[Lemma 5.15, 5.21, Remark 5.16]{deligne-griffiths-morgan-sullivan}. Therefore, while compact K\"ahler manifolds satisfy the $\del\delbar$-Lemma, for complex non-K\"ahler manifolds the above maps may be neither injective nor surjective. A (non-canonical) comparison between Bott-Chern and Aeppli cohomologies and de Rham cohomology is provided by the inequality {\itshape à la} Fr\"olicher in \cite[Theorem A]{angella-tomassini-3}. More precisely, for any $k\in\Z$, we have the non-negative degree
$$ \Delta^k \;:=\; \dim_\C H^k_{BC}(X) + \dim_\C H^k_{A}(X) - 2\, b_k \;\in\; \N \;, $$
where $H^k_{BC}(X):=\bigoplus_{p+q=k}H^{p,q}_{BC}(X)$, (and same for Aeppli,) and $b_k:=\dim_\C H^{k}_{dR}(X;\C)$ denotes the $k$th Betti number.
The validity of the $\del\delbar$-Lemma is characterized by $\Delta^k=0$ for any $k\in\Z$, \cite[Theorem B]{angella-tomassini-3}. Such a result is extended to generalized-complex structures, here including symplectic structures, in \cite{angella-tomassini-5, chan-suen}.
An upper-bound of the dimensions of the Bott-Chern cohomology in terms of the Hodge numbers is provided in \cite{angella-tardini-1}.

\bigskip

In this note, we study the cohomology of compact complex manifolds.

In general, the degrees $\Delta^k$ measure the failure of $\del\delbar$-Lemma, that is, non-cohomologically-K\"ahlerness. In fact, they measure non-K\"ahlerness for compact complex surfaces. This is because of the topological characterization of K\"ahlerness in terms of the parity of the first Betti number, \cite{kodaira-1, miyaoka, siu}, see also \cite[Corollaire 5.7]{lamari}, and \cite[Theorem 11]{buchdahl}.

In \cite{teleman}, (see also \cite{lubke-teleman},) A. Teleman proves that, for compact complex surfaces, $\Delta^1$ is always zero and $\Delta^2\in\{0,2\}$; and, in \cite{angella-dloussky-tomassini}, the Bott-Chern and Aeppli cohomologies for compact complex surfaces diffeomorphic to solvmanifolds and for class $\text{VII}$ surfaces are computed, here including Inoue and Kodaira surfaces, \cite[Theorem 4.1]{angella-dloussky-tomassini}, and class $\text{VII}$ surfaces, \cite[Theorem 2.2]{angella-dloussky-tomassini}.

\begin{thm}[{\cite[Lemma 2.3]{teleman}}]
 Let $X$ be a compact complex surface. Then:
 \begin{itemize}
  \item $b_1$ is even if and only if $\Delta^1=\Delta^2=0$;
  \item $b_1$ is odd if and only if $\Delta^1=0$ and $\Delta^2=2$.
 \end{itemize}
\end{thm}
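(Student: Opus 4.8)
The plan is to separate the cases $b_1$ even and $b_1$ odd, and to note at the outset that the ``if'' statements follow formally from the ``only if'' ones: since $b_1$ is either even or odd and the conclusions $(\Delta^1,\Delta^2)=(0,0)$ and $(\Delta^1,\Delta^2)=(0,2)$ are mutually exclusive, it suffices to prove that $b_1$ even forces $\Delta^1=\Delta^2=0$ and that $b_1$ odd forces $\Delta^1=0$, $\Delta^2=2$. The first implication is immediate: if $b_1$ is even then $X$ carries a K\"ahler metric by Kodaira \cite{kodaira-1}, Miyaoka \cite{miyaoka} and Siu \cite{siu} (see also \cite[Corollaire 5.7]{lamari} and \cite[Theorem 11]{buchdahl}), hence satisfies the $\del\delbar$-Lemma, hence $\Delta^k=0$ for all $k$ by \cite[Theorem B]{angella-tomassini-3}. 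From now on assume $b_1$ is odd; abbreviate $h^{p,q}_{BC}:=\dim_\C H^{p,q}_{BC}(X)$, and likewise $h^{p,q}_A$, $h^{p,q}_{\delbar}$, and set $p_g:=h^{2,0}_{\delbar}$.

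Using the conjugation symmetry $h^{p,q}_{BC}=h^{q,p}_{BC}$ together with Schweitzer's duality $h^{p,q}_{BC}=h^{2-p,2-q}_{A}$ \cite{schweitzer}, the degrees in total degrees $1$ and $2$ read
\[
\Delta^1=2\,h^{1,0}_{BC}+2\,h^{2,1}_{BC}-2\,b_1,\qquad \Delta^2=4\,h^{2,0}_{BC}+2\,h^{1,1}_{BC}-2\,b_2,
\]
so everything reduces to computing $h^{1,0}_{BC}$, $h^{2,0}_{BC}$, $h^{2,1}_{BC}$, $h^{1,1}_{BC}$. The key structural fact I would establish is
\[
\wedge^{2,0}X=\del\bigl(\wedge^{1,0}X\bigr)\oplus H^{2,0}_{\delbar}(X),
\]
where $H^{2,0}_{\delbar}(X)=H^0(X,\Omega^2_X)$ is identified with the space of $\delbar$-closed $(2,0)$-forms. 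The sum is direct: if $\gamma=\del\eta$ is a holomorphic $2$-form, then $\de\overline{\gamma}=0$ (as $\delbar\gamma=0$ and $\wedge^{0,3}X=0$), whence $\de(\eta\wedge\overline{\gamma})=\del\eta\wedge\overline{\gamma}=\gamma\wedge\overline{\gamma}$, so $\int_X\gamma\wedge\overline{\gamma}=0$ by Stokes, forcing $\gamma=0$; in particular $\del\theta=0$ for every holomorphic $1$-form $\theta$ (since $\del\theta$ is then a holomorphic $2$-form lying in $\del(\wedge^{1,0}X)$), so $h^{1,0}_{BC}=h^{1,0}_{\delbar}$, and trivially $h^{2,0}_{BC}=p_g$. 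The sum exhausts $\wedge^{2,0}X$ because conjugation identifies $\wedge^{2,0}X/\del(\wedge^{1,0}X)$ with $\wedge^{0,2}X/\delbar(\wedge^{0,1}X)=H^{0,2}_{\delbar}(X)$, of dimension $p_g$, while $H^{2,0}_{\delbar}(X)$ also has dimension $p_g$ and injects into that quotient by directness.

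Granting this, $\Delta^1=0$ follows at once: the natural map $H^{2,1}_{BC}(X)\to H^{2,1}_{\delbar}(X)$ is onto because every $\delbar$-closed $(2,1)$-form is automatically $\del$-closed ($\wedge^{3,1}X=0$), and its kernel $\delbar(\wedge^{2,0}X)/\del\delbar(\wedge^{1,0}X)$ is isomorphic to $\wedge^{2,0}X\big/\bigl(\del(\wedge^{1,0}X)+H^{2,0}_{\delbar}(X)\bigr)=0$; hence $h^{2,1}_{BC}=h^{2,1}_{\delbar}=h^{0,1}_{\delbar}$ by Serre duality, and, since the Fr\"olicher spectral sequence of a compact complex surface degenerates at the first page ($b_1=h^{1,0}_{\delbar}+h^{0,1}_{\delbar}$), $\Delta^1=2(h^{1,0}_{\delbar}+h^{0,1}_{\delbar}-b_1)=0$. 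For $\Delta^2$ it remains to prove $h^{1,1}_{BC}=h^{1,1}_{\delbar}+1$, which I would extract from the natural map $\varphi\colon H^{1,1}_{BC}(X)\to H^{1,1}_{\delbar}(X)$: its kernel is isomorphic to $\bigl(\wedge^{1,0}X\cap\ker\del\delbar\bigr)\big/\bigl(H^{1,0}_{\delbar}(X)+\del C^\infty(X;\C)\bigr)$, that is, $H^{1,0}_A(X)$ modulo the image of $H^{1,0}_{\delbar}(X)$; this image has dimension $h^{1,0}_{\delbar}$, since $H^{1,0}_{\delbar}(X)\to H^{1,0}_A(X)$ is injective (a holomorphic $1$-form $\del f$ forces $\del\delbar f=0$, hence $f$ constant and $\del f=0$), so $\dim\ker\varphi=h^{1,0}_A-h^{1,0}_{\delbar}=h^{0,1}_{\delbar}-h^{1,0}_{\delbar}=1$, using $h^{1,0}_A=h^{1,2}_{BC}=h^{2,1}_{BC}=h^{0,1}_{\delbar}$ and the classical identity $h^{0,1}_{\delbar}=h^{1,0}_{\delbar}+1$ for surfaces with odd first Betti number. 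Surjectivity of $\varphi$ follows from the structural identity together with the vanishing of the Fr\"olicher differential $H^{1,1}_{\delbar}(X)\to H^{2,1}_{\delbar}(X)$: for $\delbar$-closed $\alpha\in\wedge^{1,1}X$ one has $\del\alpha=\delbar\mu$ with $\mu=\del\zeta+\sigma$, $\sigma$ holomorphic, and then $\alpha+\delbar\zeta$ is a $\del$- and $\delbar$-closed representative of $[\alpha]_{\delbar}$. Therefore $h^{1,1}_{BC}=h^{1,1}_{\delbar}+1$, and $\Delta^2=4p_g+2\,(h^{1,1}_{\delbar}+1)-2\,b_2=2$ since $b_2=2p_g+h^{1,1}_{\delbar}$.

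I expect the main obstacle to be the structural identity $\wedge^{2,0}X=\del(\wedge^{1,0}X)\oplus H^{2,0}_{\delbar}(X)$ and, closely tied to it, the surjectivity of the comparison map $H^{1,1}_{BC}(X)\to H^{1,1}_{\delbar}(X)$; once these are in place, the rest is linear algebra combined with three classical facts about compact complex surfaces — the topological characterization of K\"ahlerness, the degeneration at $E_1$ of the Fr\"olicher spectral sequence, and Serre duality.
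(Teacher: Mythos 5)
Your argument is correct. Note, however, that the paper itself does not prove this statement: it is quoted verbatim from Teleman (\cite[Lemma 2.3]{teleman}) and used as a black box, so there is no in-paper proof to compare against. What you have written is essentially a faithful reconstruction of Teleman's own argument: the even case via the topological characterization of K\"ahlerness plus the $\del\delbar$-Lemma, and the odd case hinging on the decomposition $\wedge^{2,0}X=\del\bigl(\wedge^{1,0}X\bigr)\oplus H^0(X,\Omega^2_X)$ (which is the key lemma in Teleman's paper, proved there by the same Stokes/positivity argument $\int_X\gamma\wedge\overline{\gamma}=0\Rightarrow\gamma=0$ you give), combined with Schweitzer duality $h^{p,q}_{BC}=h^{2-p,2-q}_{A}$, conjugation symmetry, degeneration of the Fr\"olicher spectral sequence at $E_1$, and the identity $h^{0,1}_{\delbar}=h^{1,0}_{\delbar}+1$ for surfaces with $b_1$ odd. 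I checked the two places where such arguments typically go wrong --- the identification of $\ker\bigl(H^{2,1}_{BC}\to H^{2,1}_{\delbar}\bigr)$ with $\wedge^{2,0}X/\bigl(\del(\wedge^{1,0}X)+H^{2,0}_{\delbar}(X)\bigr)$, and the computation $\dim\ker\varphi=h^{1,0}_A-h^{1,0}_{\delbar}=1$ via injectivity of $H^{1,0}_{\delbar}(X)\to H^{1,0}_{A}(X)$ --- and both are sound. So the proposal is a complete proof, modulo the standard surface-theoretic inputs you explicitly invoke.
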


We notice that this result provides an answer to a question by A. Fujiki, asking whether $\Delta^2$ may change under deformations of the complex structure.

\begin{cor}
 For compact complex surfaces, $\Delta^1$ and $\Delta^2$ are topological invariants.
\end{cor}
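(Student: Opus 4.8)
The plan is to deduce the statement directly from the Theorem above, which already expresses both $\Delta^1$ and $\Delta^2$ in terms of the first Betti number alone. First I would record that, by the Theorem, $\Delta^1=0$ for every compact complex surface, with no hypothesis on the parity of $b_1$; hence $\Delta^1$ is identically $0$ on the class of compact complex surfaces, and is therefore a topological invariant in the trivial sense.

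For $\Delta^2$, the Theorem yields the dichotomy $\Delta^2=0$ when $b_1$ is even and $\Delta^2=2$ when $b_1$ is odd; equivalently, $\Delta^2$ is a function of the residue $b_1 \bmod 2$. It then remains only to recall that $b_1=\dim_\C H^1_{dR}(X;\C)$ equals the rank of $H_1(X;\Z)$, hence is a homotopy invariant of the underlying topological space; so is its parity, and therefore so is $\Delta^2$. Composing these observations gives the assertion.

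There is no genuine obstacle here: all the content sits in the Theorem, which shows that, although $H^\bullet_{BC}(X)$ and $H^\bullet_{A}(X)$ depend a priori on the complex structure, their joint contribution to $\Delta^1$ and $\Delta^2$ collapses onto purely topological data. I would close by observing, in the spirit of the discussion preceding the statement, that since a deformation of the complex structure of $X$ does not alter the underlying smooth, hence topological, manifold, the Corollary implies in particular that $\Delta^2$ is invariant under deformations, which answers Fujiki's question.
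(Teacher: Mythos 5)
Your proposal is correct and coincides with the paper's (implicit) argument: the Corollary is an immediate consequence of the Theorem, since $\Delta^1$ vanishes identically and $\Delta^2$ is determined by the parity of $b_1$, which is a topological invariant. Nothing further is needed.
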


Note that this is no more true in higher-dimension: see the examples on the Iwasawa manifold in \cite{angella-1}, and the examples on the Nakamura manifold in \cite{angella-kasuya-1, angella-kasuya-2}.

\medskip

In higher dimension, we get a result concerning the first degree $\Delta^1$ under additional assumptions concerning the existence of special Hermitian metrics. 

\renewcommand{\referenza}{\ref{thm:delta-1-higher}}
\begin{thm*}
 Let $X$ be a compact complex manifold of complex dimension $n$ endowed with a Hermitian metric $g$. Suppose that its associated $(1,1)$-form $\omega$ satisfies either the condition that $\de\omega^{n-2}\in\imm\de\de^c$, or the condition that $\omega^{n-2}$ is the $(n-2,n-2)$-component of a $\de$-exact $(2n-4)$-form. (In particular, such an $\omega$ is astheno-K\"ahler in the sense of Jost and Yau \cite{jost-yau}.)
 Then $\Delta^1=0$.
\end{thm*}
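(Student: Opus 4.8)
The plan is to reduce the identity $\Delta^1=0$ to three statements in degree one, and to prove each of them by an integration by parts against $\omega^{n-2}$.

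First, the reduction. Complex conjugation gives $\dim_\C H^{p,q}_{BC}(X)=\dim_\C H^{q,p}_{BC}(X)$ and $\dim_\C H^{p,q}_{A}(X)=\dim_\C H^{q,p}_{A}(X)$, so $\dim_\C H^1_{BC}(X)=2\dim_\C H^{1,0}_{BC}(X)$ and $\dim_\C H^1_{A}(X)=2\dim_\C H^{1,0}_{A}(X)$, whence $\Delta^1=2\bigl(\dim_\C H^{1,0}_{BC}(X)+\dim_\C H^{1,0}_{A}(X)-b_1\bigr)$. Now $H^{1,0}_{BC}(X)$ is the space of $\de$-closed holomorphic $1$-forms; and the inclusion of $\del$-closed into $\del\delbar$-closed $(1,0)$-forms induces an injection $H^{1,0}_{\del}(X)\hookrightarrow H^{1,0}_{A}(X)$ whose cokernel embeds into $H^{2,0}_{BC}(X)=\ker\de\cap\wedge^{2,0}X$ via $[\phi]_A\mapsto\del\phi$, while $\dim_\C H^{1,0}_{\del}(X)=\dim_\C H^{0,1}_{\delbar}(X)$ by conjugation. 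Writing $h^{p,q}:=\dim_\C H^{p,q}_{\delbar}(X)$, it therefore suffices to prove:
\begin{itemize}
 \item[(a)] every holomorphic $1$-form on $X$ is $\de$-closed (giving $\dim_\C H^{1,0}_{BC}(X)=h^{1,0}$);
 \item[(b)] every $(1,0)$-form $\phi$ with $\del\delbar\phi=0$ satisfies $\del\phi=0$ (giving $\dim_\C H^{1,0}_{A}(X)=h^{0,1}$);
 \item[(c)] $b_1=h^{1,0}+h^{0,1}$, i.e.\ the Fr\"olicher spectral sequence degenerates at $E_1$ in total degree $1$.
\end{itemize}
(By \cite[Theorem A]{angella-tomassini-3} one has $\Delta^1\geq 0$, so only the corresponding inequalities are really needed.) Both hypotheses on $\omega$ give not only $\del\delbar\,\omega^{n-2}=0$ but, more precisely, $\del\,\omega^{n-2}\in\imm\del\delbar$: if $\de\omega^{n-2}=\de\de^c\Gamma$, comparing bidegrees forces $\del\omega^{n-2}\in\imm\del\delbar$; and if $\omega^{n-2}=(\de\eta)^{(n-2,n-2)}=\del\eta^{(n-3,n-2)}+\delbar\eta^{(n-2,n-3)}$, then $\del\omega^{n-2}=\del\delbar\,\eta^{(n-2,n-3)}$. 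Fix $\xi$ with $\del\omega^{n-2}=\del\delbar\,\xi$.

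Statements (a) and (b) follow from one integration-by-parts argument. Let $\phi\in\wedge^{1,0}X$ with $\del\delbar\phi=0$ (a holomorphic $1$-form is such a $\phi$, moreover with $\delbar\phi=0$); then $\del\delbar\bar\phi=-\overline{\del\delbar\phi}=0$ as well, and in case (a) even $\del\bar\phi=\overline{\delbar\phi}=0$. Since $\del\phi$ is of type $(2,0)$ it is primitive, so $\del\phi\wedge\overline{\del\phi}\wedge\omega^{n-2}$ is a non-negative multiple of the volume form, vanishing exactly where $\del\phi$ does. Expanding $\de\bigl(\phi\wedge\delbar\bar\phi\wedge\omega^{n-2}\bigr)$ (a $(n-1,n)$-form, so its $\de$ is of top degree) and discarding the term $\phi\wedge\del\delbar\bar\phi\wedge\omega^{n-2}=0$, Stokes' theorem gives $\int_X\del\phi\wedge\overline{\del\phi}\wedge\omega^{n-2}=\int_X\phi\wedge\delbar\bar\phi\wedge\del\omega^{n-2}$. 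In case (a), $\delbar\phi=0$ and $\delbar\del\omega^{n-2}=-\del\delbar\,\omega^{n-2}=0$, so expanding $\delbar\bigl(\phi\wedge\bar\phi\wedge\del\omega^{n-2}\bigr)$ shows the right-hand side is $0$, whence $\del\phi=0$. In case (b) one substitutes $\del\omega^{n-2}=\del\delbar\,\xi$ and integrates by parts twice more — each time a term is discarded because $\del\delbar\phi=\del\delbar\bar\phi=0$ — until the integrand becomes $\del$-exact, then $\delbar$-exact, of top degree; Stokes again gives $0$, so $\del\phi=0$.

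It remains to establish (c), which I expect to be the main obstacle. Statement (a) yields $E_\infty^{1,0}\cong H^{1,0}_{BC}(X)=H^{1,0}_{\delbar}(X)=E_1^{1,0}$, so (c) is equivalent to $E_\infty^{0,1}=E_1^{0,1}$; the only differentials out of position $(0,1)$ on the pages are $d_1\colon E_1^{0,1}\to E_1^{1,1}$ and $d_2\colon E_2^{0,1}\to E_2^{2,0}$, there are no incoming ones, and (b) makes $d_2$ vanish once $d_1$ does — so (c) reduces to: \emph{for every $\delbar$-closed $(0,1)$-form $\psi$, the $(1,1)$-form $\del\psi$ is $\delbar$-exact}. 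Since $\del\psi$ has type $(1,1)$, not $(2,0)$, there is no positive pointwise quantity available: one can still run the scheme of (b) on the $\del$-closed $(1,1)$-form $\delbar\bar\psi=\overline{\del\psi}$ (note $\del\bar\psi=\overline{\delbar\psi}=0$, hence $\del\delbar\bar\psi=0$) to obtain $\int_X\del\psi\wedge\overline{\del\psi}\wedge\omega^{n-2}=0$, but this only expresses a cancellation between the primitive part and the $\Lambda$-trace part of $\del\psi$. The hard point is to upgrade this to $\delbar$-exactness of $\del\psi$, which seems to require inserting the global exact primitive $\xi$ of $\del\omega^{n-2}$ into the Hodge decomposition of $\del\psi$ rather than arguing pointwise; this is the step where the hypotheses are genuinely used beyond mere astheno-K\"ahlerness.
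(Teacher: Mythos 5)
Your reduction of $\Delta^1=0$ to statements (a), (b), (c) is correct, and your integration-by-parts proofs of (a) and (b) are sound: since $\del\phi$ is a primitive $(2,0)$-form, $\int_X\del\phi\wedge\overline{\del\phi}\wedge\omega^{n-2}$ is a positive multiple of $\|\del\phi\|^2$, and the hypothesis $\del\omega^{n-2}\in\imm\del\delbar$ lets you integrate by parts until every term carries a factor $\del\delbar\phi$, $\del\delbar\bar\phi$, $\delbar^2\xi$ or $\delbar\del\omega^{n-2}$; this is the mechanism of Lemma \ref{lemma:deg-inj-on-exact} transposed to bidegree $(2,0)$. But your statement (c) --- that $b_1=h^{1,0}+h^{0,1}$, equivalently that $\del\psi$ is $\delbar$-exact for every $\delbar$-closed $(0,1)$-form $\psi$ --- is left unproven, and you say so yourself. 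This is not a routine remaining step: given (a) and (b) one has $\Delta^1=2\left(h^{1,0}+h^{0,1}-b_1\right)$, so (c) is literally equivalent to the conclusion of the theorem, and, as you correctly observe, the positivity that powers (a) and (b) is unavailable here because the pairing $\gamma\mapsto\int_X\gamma\wedge\bar\gamma\wedge\omega^{n-2}$ is indefinite on $(1,1)$-forms. So the proposal has a genuine gap, located exactly at its crux.

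The paper closes this gap by refusing to split into pure types. It represents classes of the total-degree group $H^1_{A}(X)$ by real $1$-forms $\alpha$ and proves that the sequence $0 \to H^1_{dR}(X;\C) \to H^1_{A}(X) \stackrel{\deg\circ\de}{\to} \C$ is exact, in two steps: (i) writing $\rho=\de\alpha=\rho^{(2,0)}+\rho^{(1,1)}+\rho^{(0,2)}$, Lemma \ref{_prod_4-form_Lemma_} gives $0=\int_X\rho\wedge\rho^{(0,2)}\wedge\omega^{n-2}=\int_X|\rho^{(0,2)}|^2\,\omega^n$ --- positivity survives because the integrand pairs $\rho^{(0,2)}$ against $\rho^{(2,0)}=\overline{\rho^{(0,2)}}$ rather than $\rho^{(1,1)}$ against itself --- so $\de\alpha$ is of pure type $(1,1)$; (ii) Lemma \ref{lemma:deg-inj-on-exact} then produces a closed representative $\alpha-\de^cf$ of the Aeppli class. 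This yields $\dim_\C H^1_{A}(X)\le b_1+1$, which combined with the injectivity of $H^1_{BC}(X)\to H^1_{dR}(X;\C)$ and the evenness of $\Delta^1$ forces $\Delta^1=0$. In your language, the missing half of (c) is recovered from a parity count on the total degree-one groups rather than proved form by form; if you want to complete your route, bound $\dim_\C H^{1,0}_{A}(X)=\tfrac12\dim_\C H^1_{A}(X)$ via this exact sequence instead of trying to show directly that $d_1$ vanishes on $E_1^{0,1}$.
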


Note that the assumption is trivially satisfied for compact complex surfaces.
On the other side, as in \cite{angella-tomassini-1, enrietti-fino-vezzoni}, we show that $6$-dimensional nilmanifolds endowed with left-invariant complex structures never admit Hermitian metrics as above, see Proposition \ref{prop:nilmfd}.
Notwithstanding, there are examples of $6$-dimensional manifolds with $\Delta^1=0$, see \cite{angella-franzini-rossi, latorre-ugarte-villacampa}.

\bigskip

\noindent{\sl Acknowledgments.} The authors are grateful to Ionut Chiose, Georges Dloussky, Akira Fujiki, and Andrei Teleman for valuable comments, useful conversations, and for their interest in the subject.
In particular, Ionut Chiose pointed out to us some improvements and mistakes in a previous draft.
Thanks also to the anonymous Referees for their comments and suggestions.

\section{Non-K\"ahlerness \texorpdfstring{$1$}{1}st degree for higher-dimensional complex manifolds}

Let $X$ be a compact complex manifold of complex dimension $n$ endowed with a Hermitian metric $g$. Denote by $\omega$ its associated $(1,1)$-form. Recall that
$$ \mathcal{D} \colon \mathcal{C}^\infty(X;\C) \to \wedge^{2n}X \otimes \C \;, \qquad \mathcal{D}(f) \;:=\; \de\de^cf\wedge\omega^{n-1} $$
is an elliptic differential operator with index zero and $1$-dimensional kernel, \cite{gauduchon-cras-1975}.

Define the Hermitian degree
$$ \deg \colon H^{1,1}_{BC}(X) \to \frac{\wedge^{2n}X\otimes\C}{\imm\mathcal{D}} \;\simeq\; \C \;, \qquad \deg([\alpha]) \;:=\; \frac{\alpha\wedge \omega^{n-1}}{\omega^n} \;. $$

\begin{lemma}\label{lemma:deg-inj-on-exact}
 Let $X$ be a compact complex manifold of complex dimension $n$ endowed with a Hermitian metric $g$. Suppose that its associated $(1,1)$-form $\omega$ satisfies the following condition:
 \begin{description}
  \item[(a)] $\omega^{n-2}$ is the $(n-2,n-2)$-component of a $\de$-closed $(2n-4)$-form.
 \end{description}
If $[\de\alpha]\in H^{1,1}_{BC}(X)$ is such that $\deg([\de\alpha])=0$, then $[\de\alpha]=0$.
\end{lemma}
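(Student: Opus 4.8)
The plan is to reduce to the case in which $\de\alpha$ is pointwise $\omega$-primitive, then to prove $\int_X \de\alpha\wedge\overline{\de\alpha}\wedge\omega^{n-2}=0$ by integration by parts exploiting hypothesis (a), and finally to use the definiteness of the Hodge--Riemann pairing on primitive $(1,1)$-forms to conclude that $\de\alpha\equiv 0$. First I would unravel the hypothesis $\deg([\de\alpha])=0$: by definition of the Hermitian degree it says exactly $\de\alpha\wedge\omega^{n-1}\in\imm\mathcal D$, so there is $g\in\mathcal C^\infty(X;\C)$ with $\de\alpha\wedge\omega^{n-1}=\de\de^c g\wedge\omega^{n-1}$. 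Since $\de\de^c g\in\imm\del\delbar$, write $\de\de^c g=\del\delbar h$ and replace $\alpha$ by $\alpha-\delbar h$. This subtracts $\del\delbar h\in\imm\del\delbar$ from $\de\alpha$, hence does not change the Bott--Chern class $[\de\alpha]\in H^{1,1}_{BC}(X)$, keeps $\de\alpha$ of type $(1,1)$, and achieves $\de\alpha\wedge\omega^{n-1}=0$; that is, $\de\alpha$ is now a pointwise $\omega$-primitive $(1,1)$-form. It therefore suffices to prove $\de\alpha=0$ for this representative.

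Next, since $\overline{\de\alpha}=\de\bar\alpha$ is $\de$-closed, integrating by parts gives $\int_X \de\alpha\wedge\overline{\de\alpha}\wedge\omega^{n-2}=\int_X \alpha\wedge\overline{\de\alpha}\wedge\de\omega^{n-2}$. Hypothesis (a) furnishes a $\de$-closed $(2n-4)$-form $\Omega$ whose $(n-2,n-2)$-component is $\omega^{n-2}$; setting $\gamma:=\Omega-\omega^{n-2}$ (the sum of the components of $\Omega$ of bidegree $\ne(n-2,n-2)$) we get $\de\omega^{n-2}=-\de\gamma$, and integrating by parts once more yields $\int_X \alpha\wedge\overline{\de\alpha}\wedge\de\omega^{n-2}=-\int_X \de\alpha\wedge\overline{\de\alpha}\wedge\gamma$. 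Now $\de\alpha\wedge\overline{\de\alpha}$ has bidegree $(2,2)$ and $\gamma$ has no component of bidegree $(n-2,n-2)$, so $\de\alpha\wedge\overline{\de\alpha}\wedge\gamma$ has no component of bidegree $(n,n)$ and vanishes identically on the $n$-fold $X$; hence $\int_X \de\alpha\wedge\overline{\de\alpha}\wedge\omega^{n-2}=0$. (Note that the signs produced by the Leibniz rule are irrelevant, the final value being $0$.)

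Finally, because after the normalization $\de\alpha$ is a pointwise primitive $(1,1)$-form, Weil's identity gives the pointwise relation $\de\alpha\wedge\overline{\de\alpha}\wedge\omega^{n-2}=-(n-2)!\,|\de\alpha|^2_g\,\mathrm{vol}_g$; integrating and using the previous step, $0=-(n-2)!\,\|\de\alpha\|^2_{L^2}$, so $\de\alpha=0$ and thus $[\de\alpha]=0$ in $H^{1,1}_{BC}(X)$. I expect the only genuinely nonroutine point to be the first step: recognizing that vanishing of the Hermitian degree is precisely the solvability condition allowing one to correct $\de\alpha$, within its Bott--Chern class and keeping it of pure type $(1,1)$, to an $\omega$-primitive form; once that normalization is in place, the repeated integrations by parts are routine, the role of hypothesis (a) being exactly to let one move all the derivatives onto $\de\alpha\wedge\overline{\de\alpha}$ and land on a form supported away from bidegree $(n,n)$.
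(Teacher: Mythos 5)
Your proposal is correct and follows essentially the same route as the paper: use the vanishing of the Hermitian degree to correct $\alpha$ within its Bott--Chern class so that $\de\alpha$ becomes $\omega$-primitive, show $\int_X \de\alpha\wedge\overline{\de\alpha}\wedge\omega^{n-2}=0$ via hypothesis \emph{(a)}, Stokes, and the bidegree argument, and conclude by the Weil identity that the $L^2$-norm of $\de\alpha$ vanishes. The only cosmetic difference is that the paper substitutes the closed form $\beta$ for $\omega^{n-2}$ inside the integral and integrates by parts once, whereas you integrate by parts twice against the complement $\Omega-\omega^{n-2}$; the two computations are equivalent.
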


\begin{proof}
 By the hypothesis: 
 take $f\in\mathcal{C}^\infty(X;\C)$ such that
 $$ \de \alpha \wedge \omega^{n-1} \;=\; \de\de^c f \wedge \omega^{n-1} \;. $$
  
 Set $\alpha':=\alpha-\de^c f$. Note that $[\de\alpha]=[\de\alpha']$ in $H^{1,1}_{BC}(X)$, and that $\de\alpha'$ is primitive. Hence use Weil identity to write $*\de\alpha'=\frac{1}{(n-2)!}J\de\alpha'\wedge\omega^{n-2}=\frac{1}{(n-2)!}\de\alpha'\wedge\omega^{n-2}$.
  
 Under the assumption {\itshape (a)}, take $\beta\in\wedge^{2n-4}$ such that: (here $\pi_{\wedge^{n-2,n-2}X}$ denotes the natural projections onto $\wedge^{n-2,n-2}X$)
 $$ \omega^{n-2} \;=\; \pi_{\wedge^{n-2,n-2}X}\beta \qquad \text{ with } \qquad \de\beta \;=\;0 \;. $$
 We have:
 \begin{eqnarray*}
  \| \de\alpha' \|^2 &=& \int_X \de\alpha' \wedge \overline{*(\de\alpha')} \;=\; -\frac{1}{(n-2)!} \int_X \de\alpha' \wedge \de\bar\alpha' \wedge \omega^{n-2} \\[5pt]
  &=& - \frac{1}{(n-2)!} \int_X \de\alpha' \wedge \de\bar\alpha' \wedge \pi_{\wedge^{n-2,n-2}X}\beta \\[5pt]
  &=& - \frac{1}{(n-2)!} \int_X \de\alpha' \wedge \de\bar\alpha' \wedge \beta \\[5pt]
  &=& {\frac{1}{(n-2)!}} \int_X \alpha' \wedge \de\bar\alpha' \wedge \de\beta \;=\; 0 \;,
 \end{eqnarray*}
 thus giving $\de\alpha'=0$.
\end{proof}

\begin{rem}
 Note that the condition {\itshape (a)} in Lemma \ref{lemma:deg-inj-on-exact} yields that $\de\de^c\omega^{n-2}=0$. That is, $\omega$ is {\em astheno-K\"ahler} in the sense of J. Jost and S.-T. Yau, \cite{jost-yau}. Note that the condition is trivially satisfied in case $2n=4$.
 In a sense, condition {\itshape (a)} is the $(n-2)$-degree counterpart of Hermitian-symplectic condition in the sense of \cite{streets-tian}. Note that a Hermitian metric satisfying $\de\omega^{n-2}=0$ is actually K\"ahler, \cite{gray-hervella}.
\end{rem}

\begin{rem}[Ionut Chiose]
Condition {\itshape (a)} is satisfied {\itshape e.g.} when
 \begin{description}
  \item[(b)] $\de\omega^{n-2}\in\imm\de\de^c$.
 \end{description}
\end{rem}

\begin{proof}
 Condition {\itshape (b)} assures that $\partial\omega^{n-2}=\partial\overline\partial\eta$ where $\eta\in\wedge^{n-2,n-3}X$. Then $\beta:=\partial\eta+\omega^{n-2}+\overline\partial\eta$ is a $(2n-4)$-form such that $\de\beta=0$ and whose $(n-2,n-2)$-component is $\omega^{n-2}$.
\end{proof}

The following generalizes the result in \cite{angella-tomassini-1}, see also \cite{enrietti-fino-vezzoni}, in proving that conditions {\itshape (a)} and {\itshape (b)} are not satisfied for $6$-dimensional nilmanifolds with invariant structures.

\begin{prop}\label{prop:nilmfd}
 On a $6$-dimensional non-torus nilmanifold endowed with a left-invariant complex structure, there is no left-invariant metric with the property {\itshape (a)} in Lemma \ref{lemma:deg-inj-on-exact}.
\end{prop}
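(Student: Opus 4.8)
The plan is to work with the Lie algebra $\mathfrak{g}$ underlying the nilmanifold and to reduce the statement to an assertion about left-invariant forms, invoking the classical symmetrization argument: averaging over the compact nilmanifold sends a metric satisfying condition \textit{(a)} to a left-invariant one satisfying the same (linear, differential-operator) condition, so it suffices to rule out left-invariant solutions. Concretely, suppose $\omega$ is left-invariant Hermitian with $\omega^{n-2}=\pi_{\wedge^{n-2,n-2}}\beta$ for a $\de$-closed invariant $(2n-4)$-form $\beta$; since $n=3$ here, $\omega^{n-2}=\omega$ itself, so the hypothesis says $\omega$ is the $(1,1)$-component of a closed invariant $2$-form $\beta$, i.e. $\de\omega = -\de\beta^{2,0} - \de\beta^{0,2}$, which forces $\de\omega$ to be of a very restricted form. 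The goal is to combine this with the structure equations of a $6$-dimensional nilpotent Lie algebra with a complex structure to force $\de\omega = 0$, hence (by Gray--Hervella, as in the remark) $\omega$ K\"ahler, hence by Benson--Gordon/Hasegawa the nilmanifold is a torus, contradicting the hypothesis.

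The key steps, in order, are: (1) state and apply symmetrization to pass to the Lie algebra level; (2) using that a left-invariant complex structure on a $6$-dimensional nilmanifold admits a coframe $\{\varphi^1,\varphi^2,\varphi^3\}$ of $(1,0)$-forms with $\de\varphi^1=\de\varphi^2=0$ and $\de\varphi^3$ a combination of $\varphi^{1\bar1},\varphi^{1\bar2},\varphi^{2\bar1},\varphi^{2\bar2},\varphi^{12}$ (the Salamon classification), write $\omega$ in this coframe and compute $\de\omega$ explicitly; (3) write the condition that $\omega = \beta^{1,1}$ with $\de\beta = 0$ as $\de\beta^{1,1} = -\de\beta^{2,0}-\de\beta^{0,2}$ and decompose by bidegree — the point being that $\de\omega = \partial\omega + \overline{\partial\omega}$ with $\partial\omega\in\wedge^{2,1}$, while $\partial\beta^{2,0}\in\wedge^{3,0}$ and $\overline\partial\beta^{2,0}\in\wedge^{2,1}$, so bidegree bookkeeping pins down $\partial\omega$ in terms of $\overline\partial\beta^{2,0}$ alone; (4) exploit that $\beta^{2,0}$ is an invariant $(2,0)$-form, hence a combination of $\varphi^{12},\varphi^{13},\varphi^{23}$, compute $\overline\partial\beta^{2,0}$ using the structure equations, and show the resulting linear system on the coefficients has only the solution making $\partial\omega = 0$; (5) conclude $\de\omega = 0$ and finish via the torus characterization.

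The main obstacle I expect is step (4): one must handle the full generality of $6$-dimensional nilpotent complex structures, which split into the classical families (the Salamon list of structure equations, essentially the distinction between $\de\varphi^3$ having or not having a $\varphi^{12}$ term, and the rank of the part landing in $\wedge^{1,1}$), and check the linear algebra case by case to see that no choice of Hermitian $\omega$ (a positive $(1,1)$-form, so its coefficient matrix is Hermitian positive-definite) together with an invariant $\beta^{2,0}$ can satisfy the equation unless $\partial\omega=0$. The positivity of $\omega$ is what breaks the symmetry and kills the would-be solutions; without it the linear system would generally be solvable. A secondary, more bookkeeping-type obstacle is organizing the bidegree decomposition cleanly so that the $(2,0)$ and $(0,2)$ parts of $\beta$ cannot conspire to produce a nonzero $\partial\omega$; here one uses that $\partial\beta^{2,0}$ lands in $\wedge^{3,0}$ and hence is forced to vanish separately (it is a closed top-holomorphic-degree invariant form contributing nothing to the $(2,1)$ equation), isolating the genuinely constraining equation in bidegree $(2,1)$.

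Alternatively, if the case analysis proves unwieldy, one can argue more structurally: condition \textit{(a)} implies $\de\de^c\omega = 0$ (astheno-K\"ahler), and for $6$-dimensional nilmanifolds astheno-K\"ahler left-invariant metrics were analyzed in \cite{enrietti-fino-vezzoni}; combining that with the extra information that $\de\omega$ is the differential of an invariant form (not merely $\de\de^c$-exact of the power) should be enough to force the torus. I would present the direct computational proof as the primary route, since it is self-contained given Salamon's classification, and mention the reduction to \cite{angella-tomassini-1, enrietti-fino-vezzoni} as the conceptual backbone.
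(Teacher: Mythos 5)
The paper's proof is exactly the one-line reduction that you relegate to your closing ``alternatively'' paragraph: for $n=3$ one has $\omega^{n-2}=\omega$, so condition \textit{(a)} says that the positive form $\omega$ is the $(1,1)$-component of a $\de$-closed $2$-form, which is precisely the Hermitian-symplectic (tamed) condition of \cite{streets-tian} (this identification is already flagged in the Remark preceding the Proposition); non-existence of such structures on non-torus $6$-dimensional nilmanifolds with invariant complex structure is then exactly \cite[Theorem 3.3]{angella-tomassini-1} and \cite[Theorem 1.3]{enrietti-fino-vezzoni}, and the proof stops there. Note that the correct reduction is to the \emph{Hermitian-symplectic} non-existence theorem, not to the astheno-K\"ahler analysis you invoke: astheno-K\"ahlerness alone is strictly weaker than condition \textit{(a)} and is not what the cited theorems rule out.

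Your primary, computational route has a genuine gap at step (2): the normal form you quote is not general. Salamon's theorem for a $6$-dimensional nilpotent Lie algebra with complex structure only gives a coframe with $\de\varphi^1=0$, $\de\varphi^2\in\mathcal{I}(\varphi^1)$, $\de\varphi^3\in\mathcal{I}(\varphi^1,\varphi^2)$; in particular $\de\varphi^2$ need not vanish (it may contain a $\varphi^{1\bar1}$ term, and for the non-nilpotent complex structures --- which occur, e.g., on $\mathfrak{h}_{19}^-$ and $\mathfrak{h}_{26}^+$ --- even terms of type $\varphi^{13}+\varphi^{1\bar3}$), and $\de\varphi^3$ may involve $\varphi^{13},\varphi^{23},\varphi^{1\bar3},\varphi^{2\bar3}$. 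So the bidegree bookkeeping of steps (3)--(4) covers only a proper subfamily of the invariant complex structures, and completing it honestly means redoing the full classification-based case analysis of \cite{angella-tomassini-1, enrietti-fino-vezzoni}, i.e.\ re-proving the theorem the paper cites. Moreover the endgame in step (5) is not the right one: the known proofs do not establish $\de\omega=0$ and then invoke Benson--Gordon/Hasegawa; they derive a contradiction from positivity of the taming form against the structure constants (via SKT reductions and trace/integration arguments). I would invert your presentation: make the reduction to the Hermitian-symplectic non-existence results the proof, and drop the computation. (Your symmetrization step is fine and is indeed needed, but only to make the auxiliary closed form $\beta$ invariant --- the metric is already assumed left-invariant in the statement.)
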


\begin{proof}
 In dimension $6$, condition {\itshape (a)} is equivalent to have a Hermitian-symplectic structure. This is proven to be impossible on $6$-dimensional nilmanifold in \cite[Theorem 3.3]{angella-tomassini-1}, except the torus; see also \cite[Theorem 1.3]{enrietti-fino-vezzoni}.
\end{proof}

\begin{rem}
Note that there are examples of $6$-dimensional nilmanifolds such that $\Delta^1=0$, see \cite{angella-franzini-rossi, latorre-ugarte-villacampa}.
\end{rem}

\begin{lemma}\label{_prod_4-form_Lemma_}
Suppose that the Hermitian $(1,1)$-form $\omega$ on a complex manifold $X$ satisfies either condition {\itshape (b)} in Lemma \ref{lemma:deg-inj-on-exact}, or condition
\begin{description}
  \item[(a')] $\omega^{n-2}$ is the $(n-2,n-2)$-component of a $\de$-exact $(2n-4)$-form.
\end{description}
Then a product of a $\de^c$-closed and $\de$-exact form $\de\nu$ with $\omega^{n-2}$ is exact.
\end{lemma}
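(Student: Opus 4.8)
\emph{Plan.} I would reduce the claim to the vanishing of the integral of a top‑degree form, and then run a Stokes‑type argument analogous to the proof of Lemma~\ref{lemma:deg-inj-on-exact}. Write $\psi^{p,q}:=\pi_{\wedge^{p,q}X}\psi$ for the bidegree components of a form $\psi$. The case to treat is $\de\nu$ of degree $4$ (the one entering the sequel; for $\deg\de\nu>4$ the product below is of degree $>2n$, hence zero), so that $\de\nu\wedge\omega^{n-2}$ is of top degree $2n$: such a form is automatically $\de$‑closed, and it is $\de$‑exact if and only if $\int_X\de\nu\wedge\omega^{n-2}=0$.

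First I would note that condition {\itshape (a')} implies {\itshape (b)}: if $\omega^{n-2}=\pi_{\wedge^{n-2,n-2}X}(\de\varrho)$ with $\varrho$ a $(2n-5)$‑form, then taking $(n-2,n-2)$‑components gives $\omega^{n-2}=\del\varrho^{n-3,n-2}+\delbar\varrho^{n-2,n-3}$, so $\del\omega^{n-2}=\del\delbar\varrho^{n-2,n-3}\in\imm\de\de^c$. Hence assume {\itshape (b)}, say $\del\omega^{n-2}=\del\delbar\eta$ with $\eta\in\wedge^{n-2,n-3}X$; since $\omega^{n-2}$ is real this also yields $\delbar\omega^{n-2}=\overline{\del\omega^{n-2}}=-\del\delbar\bar\eta$. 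Following the Remark of I.~Chiose above (that {\itshape (b)} implies {\itshape (a)}), put
$$ \beta\;:=\;\del\eta+\omega^{n-2}+\overline{\del\eta}\;, $$
a $(2n-4)$‑form whose $(n-2,n-2)$‑component is $\omega^{n-2}$, whose other components lie in $\wedge^{n-1,n-3}X$ and $\wedge^{n-3,n-1}X$, and which the two identities above render $\de$‑closed (using $\delbar\del=-\del\delbar$).

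Now comes the computation. As $\de\nu$ is $\de$‑exact it is $\de$‑closed, and being moreover $\de^c$‑closed it is $\del$‑closed and $\delbar$‑closed; consequently each bidegree component of $\de\nu$ is $\del$‑ and $\delbar$‑closed. Since $\omega^{n-2}$ and the components of $\beta$ are of pure bidegree, only the $(2,2)$‑, $(3,1)$‑ and $(1,3)$‑parts of $\de\nu$ contribute in top degree:
$$ \de\nu\wedge\omega^{n-2}=(\de\nu)^{2,2}\wedge\omega^{n-2},\qquad(\de\nu\wedge\beta)^{n,n}=(\de\nu)^{2,2}\wedge\omega^{n-2}+(\de\nu)^{3,1}\wedge\overline{\del\eta}+(\de\nu)^{1,3}\wedge\del\eta. $$
Because $\de\beta=0$ we have $\de\nu\wedge\beta=\de(\nu\wedge\beta)$, so $\int_X\de\nu\wedge\beta=0$. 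The two ``off‑diagonal'' terms also integrate to $0$: $(\de\nu)^{3,1}$ is $\delbar$‑closed, so $(\de\nu)^{3,1}\wedge\overline{\del\eta}=\delbar\bigl((\de\nu)^{3,1}\wedge\bar\eta\bigr)$, and $(\de\nu)^{3,1}\wedge\bar\eta$ has bidegree $(n,n-1)$, on which $\delbar=\de$; symmetrically $(\de\nu)^{1,3}\wedge\del\eta=\del\bigl((\de\nu)^{1,3}\wedge\eta\bigr)$ with $(\de\nu)^{1,3}\wedge\eta$ of bidegree $(n-1,n)$, on which $\del=\de$. Comparing the two displays gives $\int_X\de\nu\wedge\omega^{n-2}=\int_X(\de\nu)^{2,2}\wedge\omega^{n-2}=\int_X\de\nu\wedge\beta=0$, so $\de\nu\wedge\omega^{n-2}$ is $\de$‑exact. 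Under {\itshape (a')} one can shorten this by bypassing $\beta$: $(\de\nu)^{2,2}$ is $\de$‑closed and of pure bidegree, hence $\int_X(\de\nu)^{2,2}\wedge\omega^{n-2}=\int_X(\de\nu)^{2,2}\wedge\de\varrho=\int_X\de\bigl((\de\nu)^{2,2}\wedge\varrho\bigr)=0$.

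The main obstacle is purely organizational: fixing the signs so that $\beta$ is simultaneously $\de$‑closed and has $(n-2,n-2)$‑component exactly $\omega^{n-2}$, and checking that $(\de\nu)^{3,1}\wedge\overline{\del\eta}$ and $(\de\nu)^{1,3}\wedge\del\eta$ are exact top‑forms. This last step is the only one exploiting that $\de\nu$ is $\de^c$‑closed — it is precisely what forces $(\de\nu)^{3,1}$ and $(\de\nu)^{1,3}$ to be separately $\delbar$‑ and $\del$‑closed.
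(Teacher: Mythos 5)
Your proof is correct, but it takes a partly different route from the paper's. For condition \textit{(a')} your closing shortcut --- $\int_X(\de\nu)^{(2,2)}\wedge\omega^{n-2}=\int_X(\de\nu)^{(2,2)}\wedge\de\varrho=0$ because $(\de\nu)^{(2,2)}$ is $\de$-closed and of pure type --- is exactly the argument in the paper. For condition \textit{(b)}, however, the paper is more direct: writing $\de\omega^{n-2}=\de\de^c\mu$, it integrates by parts twice, $\int_X\de\nu\wedge\omega^{n-2}=-\int_X\nu\wedge\de\de^c\mu=\int_X\de\de^c\nu\wedge\mu=0$, the last step using only that $\de\nu$ is $\de^c$-closed; no bidegree decomposition and no auxiliary closed form are needed. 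You instead route \textit{(b)} through condition \textit{(a)}: you build the $\de$-closed form $\beta=\del\eta+\omega^{n-2}+\overline{\del\eta}$ of Chiose's remark, apply Stokes to $\de(\nu\wedge\beta)$, and dispose of the cross-terms $(\de\nu)^{(3,1)}\wedge\overline{\del\eta}$ and $(\de\nu)^{(1,3)}\wedge\del\eta$ by exhibiting them as $\delbar$- resp.\ $\del$-exact top forms. This is longer, but it buys an observation not recorded in the paper --- that \textit{(a')} implies \textit{(b)}, so the two hypotheses can be treated uniformly --- and it makes explicit where the $\de^c$-closedness of $\de\nu$ enters, namely in forcing each bidegree component of $\de\nu$ to be separately $\del$- and $\delbar$-closed. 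Both proofs rest on the same implicit framework (compactness of $X$, the fact that a top-degree form with vanishing integral is exact, and the restriction to $\deg\de\nu=4$, which is the only case used in Theorem \ref{thm:delta-1-higher}), so there is no gap; the difference lies only in how case \textit{(b)} is discharged.
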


\begin{proof}
Indeed, in case the assumption {\itshape (b)} is true, then
$\de\omega^{n-2} =\de\de^c\mu$, giving
\[ \int_X \de\nu \wedge \omega^{n-2}= -\int_X\nu\wedge \de\omega^{n-2}=
-\int_X\nu\wedge \de\de^c \mu=\int_X \de\de^c \nu \wedge \mu=0,
\]
which proves the statement.

In case the assumption {\itshape (a')} is true, let $\omega^{n-2}=(\de\kappa)^{(n-2,n-2)}$,
and 
\[ \int_X \de\nu \wedge \omega^{n-2}=  \int_X (\de\nu)^{(2,2)} \wedge \omega^{n-2}=
\int_X (\de\nu)^{(2,2)}\wedge \de\kappa
\]
because $\de\nu$ is both $\de$-closed and $\de^c$-closed, whence $(\de\nu)^{(2,2)}$ is $\de$-closed, and $\de\kappa$ is exact. 
\end{proof}

\begin{thm}\label{thm:delta-1-higher}
 Let $X$ be a compact complex manifold endowed with a Hermitian metric $g$.
 Suppose that the associated $(1,1)$-form $\omega$ satisfies either condition {\itshape (b)} in Lemma \ref{lemma:deg-inj-on-exact} or condition {\itshape (a')} in Lemma \ref{_prod_4-form_Lemma_}.
 Then $\Delta^1=0$.
\end{thm}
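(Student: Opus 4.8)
The plan is to prove that $\Delta^1 = \dim_\C H^1_{BC}(X) + \dim_\C H^1_A(X) - 2b_1 = 0$ by establishing the injectivity of the natural map $H^1_{BC}(X) \to H^1_{dR}(X;\C)$, together with a duality pairing between $H^1_{BC}$ and $H^{2n-1}_A$ (Schweitzer duality) that lets one convert the statement about $H^1_A$ into one about $H^{2n-1}_{BC}$, or—more directly—by arguing that in degree $1$ the maps $H^1_{BC}(X) \to H^1_{dR}(X;\C)$ and $H^1_{dR}(X;\C) \to H^1_A(X)$ are both isomorphisms, which forces $\Delta^1 = 0$. Since $H^{1,1}_{BC}$ only enters through exact $(1,1)$-forms in degree $1$ (a class in $H^1_{BC}$ decomposes into its $(1,0)$ and $(0,1)$ parts), the first reduction is to show: every $\partial$- and $\bar\partial$-closed $1$-form that is $d$-exact is already $\partial\bar\partial$-exact, i.e. zero in $H^1_{BC}$; and dually at level $2n-1$.

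First I would recall that a class in $H^1_{BC}(X)$ is represented by a $d$-closed $1$-form $a + \bar a'$ with $a \in \wedge^{1,0}X$, $a'\in\wedge^{1,0}X$, and $\partial$-closedness plus $\bar\partial$-closedness of the bidegree components give $\partial a = \bar\partial a = 0$ and similarly for $a'$; so it suffices to treat a $\partial$-closed, $\bar\partial$-closed $(1,0)$-form $a$ (and its conjugate). Injectivity of $H^1_{BC}(X)\to H^1_{dR}(X;\C)$ then amounts to: if such an $a$ is $d$-exact, $a + \bar a = d f$ for a function... more precisely, if $a$ is $d$-exact as a $(1,0)$-form then $a = \partial g$ for some function $g$ with $\bar\partial g = 0$, hence $a = \partial\bar\partial(\text{something})$? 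This is where Lemma \ref{_prod_4-form_Lemma_} enters: I would take $a = d\nu$ with $\nu$ a function-part, note that $d\nu$ being of pure type $(1,0)$ and $\partial\bar\partial$-type considerations, pair $\| a\|^2$ against itself using the Weil/primitivity identity as in the proof of Lemma \ref{lemma:deg-inj-on-exact}, and invoke Lemma \ref{_prod_4-form_Lemma_} to conclude that $\int_X \partial\nu\wedge\bar\partial\bar\nu\wedge\omega^{n-2}$-type integrals vanish because products of $\partial$-exact, $\partial^c$-closed forms with $\omega^{n-2}$ are exact. The surjectivity/injectivity of $H^1_{dR}(X;\C)\to H^1_A(X)$ should follow by the conjugate-linear Schweitzer duality $H^1_A(X)\cong H^{2n-1}_{BC}(X)^\vee$ together with the observation that the $(2n-1)$-degree statement is handled by the same Lemma \ref{_prod_4-form_Lemma_}: a $d$-exact form of degree $2n-1$ whose relevant component pairs trivially against $\omega$-powers is Bott--Chern trivial.

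Concretely, the steps in order: \textbf{(1)} reduce $\Delta^1 = 0$ to injectivity of $H^1_{BC}(X)\hookrightarrow H^1_{dR}(X;\C)$ and of $H^1_{dR}(X;\C)\hookrightarrow H^1_A(X)$, using that these maps sit in a sequence whose composite pattern, combined with $b_1 = \dim H^1_{dR}$, forces $\Delta^1 \le 0$, hence $= 0$; \textbf{(2)} decompose a degree-$1$ Bott--Chern class into $(1,0)$ and $(0,1)$ parts and reduce to a single $\partial$-closed $\bar\partial$-closed $(1,0)$-form $a$; \textbf{(3)} assuming $a + \bar a = d\nu$, rewrite $a = \partial\nu$ (up to the harmless $\bar\partial$-closed business), form $\|a'\|^2$ for the primitive representative $a' = a - \partial^c(\text{corrector})$ exactly as in Lemma \ref{lemma:deg-inj-on-exact}, and express $*a'$ via the Weil identity as a multiple of $a'\wedge\omega^{n-2}$; \textbf{(4)} recognize the resulting integrand as a product of a $\partial$-exact, $\partial^c$-closed form with $\omega^{n-2}$, and apply Lemma \ref{_prod_4-form_Lemma_} to get $\|a'\|^2 = 0$, hence $a' = 0$ and the class vanishes; \textbf{(5)} dualize via Schweitzer's pairing to handle $H^1_A$.

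The main obstacle I anticipate is \textbf{step (1)} and the precise bookkeeping of \textbf{step (3)}: the Frölicher-type inequality $\Delta^1\ge 0$ is automatic, so one genuinely needs the two injectivity statements, and the subtlety is that the corrector term turning $a$ into a primitive form (and ensuring it stays $\partial$-exact so that Lemma \ref{_prod_4-form_Lemma_} applies verbatim) must be chosen compatibly with both the $\partial^c$-closedness hypothesis in that lemma and the bidegree decomposition; getting the Weil identity signs and the primitivity reduction to line up so that the integrand is exactly of the form "$\partial$-exact $\wedge$ $\omega^{n-2}$" covered by Lemma \ref{_prod_4-form_Lemma_}, rather than something slightly off, is the delicate point. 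The $H^1_A$ half should then be essentially formal given Schweitzer duality, but one must check that the degree-$(2n-1)$ incarnation of the hypothesis on $\omega$ is the one actually supplied by conditions (b) and (a$'$), which it is since those are exactly stated at the level of $\omega^{n-2}$.
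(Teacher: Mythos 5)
Your step \textbf{(1)} contains the fatal error, and it propagates through the rest of the plan. Injectivity of $H^1_{BC}(X)\to H^1_{dR}(X;\C)$ gives $\dim_\C H^1_{BC}(X)\le b_1$, but injectivity of $H^1_{dR}(X;\C)\to H^1_{A}(X)$ gives $\dim_\C H^1_{A}(X)\ge b_1$ --- an inequality in the wrong direction --- so the two together bound $\Delta^1$ from \emph{below}, not above. The stronger claim that both maps are isomorphisms is simply false under the hypotheses: for $n=2$ conditions (a') and (b) are vacuous, and on a Hopf surface one has $h^1_{BC}=b_1-1=0$ and $h^1_{A}=b_1+1=2$, so neither map is surjective even though $\Delta^1=0$. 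The idea you are missing is the paper's actual mechanism: one proves that the sequence $0\to H^1_{dR}(X;\C)\to H^1_{A}(X)\xrightarrow{\deg\circ\de}\C$ is \emph{exact}, i.e.\ the cokernel of $H^1_{dR}\to H^1_{A}$ is at most one-dimensional and is detected by the Hermitian degree of $\de\alpha$. This is where Lemmas \ref{lemma:deg-inj-on-exact} and \ref{_prod_4-form_Lemma_} really enter: given $[\alpha]_A$ with $\deg[\de\alpha]_{BC}=0$, one writes $\rho=\de\alpha=\rho^{(2,0)}+\rho^{(1,1)}+\rho^{(0,2)}$, uses Lemma \ref{_prod_4-form_Lemma_} to show $\int_X\rho\wedge\rho^{(0,2)}\wedge\omega^{n-2}=0$, hence $\rho^{(0,2)}=0$ and $\rho$ is of type $(1,1)$, and then Lemma \ref{lemma:deg-inj-on-exact} produces a $\de$-closed representative $\alpha-\de^c f$ of the Aeppli class. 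One then concludes by parity: $\dim_\C H^1_{A}(X)$ is even (conjugation symmetry), so $\dim_\C H^1_A(X)-b_1\in\{0,1\}$, whence $0\le\Delta^1\le 1$, and $\Delta^1$ even forces $\Delta^1=0$. Without this cokernel bound and the parity step there is no way to get an upper bound on $\dim_\C H^1_A(X)$.

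Two secondary problems. First, your steps \textbf{(2)}--\textbf{(4)} spend the lemmas on the injectivity of $H^1_{BC}(X)\to H^1_{dR}(X;\C)$, but that injectivity is automatic on any compact complex manifold (if $\alpha^{1,0}+\alpha^{0,1}=\de f$ with both components $\del$- and $\delbar$-closed, then $\del\delbar f=0$ and the maximum principle gives $f$ constant); no metric hypothesis is needed there, and in degree $1$ there is nothing for the Weil identity to act on ($*$ of a $1$-form is not $\alpha\wedge\omega^{n-2}$; the primitivity argument of Lemma \ref{lemma:deg-inj-on-exact} lives in degree $2$, applied to $\de\alpha$, not to $\alpha$). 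Second, step \textbf{(5)} cannot be ``essentially formal'': by Schweitzer duality and Poincar\'e duality, surjectivity of $H^1_{dR}(X;\C)\to H^1_{A}(X)$ is equivalent to injectivity of $H^{2n-1}_{BC}(X)\to H^{2n-1}_{dR}(X;\C)$, which fails in general (again the Hopf surface), so no dualization can deliver the isomorphism you want.
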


\begin{proof}
 We claim that, under the hypotheses, the sequence
 \begin{equation}\label{_exact_Aeppli_H^1_Equation_} 
 0 \to H^1_{dR}(X;\C) \to H^1_{A}(X) \stackrel{\deg\circ\de}{\to} \C
 \end{equation}
 is exact.

 Indeed, take $[\alpha]_{dR}\in H^1_{dR}(X;\C)$ yielding a zero class in Aeppli cohomology, that is, $\alpha=\del f + \delbar g$ for $f,g\in\mathcal{C}^\infty(X;\C)$. Since $\de\alpha=0$, then $\del\delbar (f-g)=0$. By the maximum principle, we get $f-g$ constant. Then $\alpha=\de f$ yields the zero class in de Rham cohomology.
We have that \eqref{_exact_Aeppli_H^1_Equation_} is a complex, and it remains to show that all classes in $H^1_A(X)$
of degree 0 come from $H^1_{dR}(X)$.

Let $[\alpha]_{A} \in  H^1_{A}(X)$ satisfy $\deg[\de\alpha]_{BC}=0$.
Choose a representative $\alpha\in \wedge^1 X\otimes \C$.
Let $\rho=\de\alpha$ be decomposed as $\rho= \rho^{(1,1)}+\rho^{(2,0)}+\rho^{(0,2)}$.
Without restricting the generality we may assume that $\rho$ is real, giving $\rho^{(2,0)}= \overline{\rho^{(2,0)}}$.
Since $\alpha$ is $\de\de^c$-closed, $\de\alpha$ is both $\de$ and $\de^c$-closed, hence the forms $\rho^{(1,1)}$, $\rho^{(2,0)}$ and $\rho^{(0,2)}$ are closed.
Since $\rho$ is $\de^c$-closed and $\de$-exact, the form $\rho\wedge \rho^{(0,2)}$ is also $\de^c$-closed and $\de$-exact.
However, a product of a $\de^c$-closed and $\de$-exact $4$-form $\de\nu$ with $\omega^{n-2}$ is exact by Lemma \ref{_prod_4-form_Lemma_} above.
This gives
\[ 0 = 
   \int_X \rho\wedge \rho^{(0,2)}\wedge \omega^{n-2}
=\int_X \rho^{(0,2)}\wedge \rho^{(2,0)}\wedge \omega^{n-2}= \int_X |\rho^{(0,2)}|^2 \omega^n.
\]
This implies that $\rho=\de\alpha$ is of type $(1,1)$.
By Lemma \ref{lemma:deg-inj-on-exact}, we can replace the representative $\alpha$ by $\alpha-\de^c f$ in Aeppli cohomology in such a way that $\de(\alpha-d^c f)=0$, thus proving the claim.

 It follows that
 $$ \dim_\C H^{1}_{A}(X) - b_1 \;=\; 1 - \dim_\C\coker(\deg\circ\de) \;\in\; \{0,1\} $$
 according to the parity of $b_1$. Indeed, $\dim_\C H^1_{A}(X)$ is always even.

 Note that the natural map $H^1_{BC}(X) \to H^1_{dR}(X;\C)$ is always injective. Then
 \begin{eqnarray*} 
  0 \;\leq\; \Delta^1 &=& \dim_\C H^1_{A}(X) + \dim_\C H^1_{BC}(X) - 2\, b_1 \\[5pt]
  &\leq& \left( 1 - \dim_\C\coker(\deg\circ\de) \right) \;\leq\; 1 \;.
 \end{eqnarray*}
 Since $\Delta^1$ has to be even, this yields $\Delta^1 = 0$.
\end{proof}

\begin{rem}
 The same statement can be proven under the hypotheses that the Hermitian $(1,1)$-form $\omega$ satisfies condition {\itshape (a)} in Lemma \ref{lemma:deg-inj-on-exact} and that $H^{2,0}_{BC}(X)=0$.
In this case, the exactness at $H^1_{A}(X)$ follows directly by Lemma \ref{lemma:deg-inj-on-exact}, noting that $[\de\alpha]_{BC}\in H^{2}_{BC}(X)=H^{1,1}_{BC}(X)$.
 The condition $H^{2,0}_{BC}(X)=0$ may hold, for example, for a hypothetical complex structure on the six-sphere with $H^{2,0}_{\overline\partial}(X)=0$, see {\itshape e.g.} \cite{ugarte, mchugh-1}. In low dimension, the condition is satisfied by the Bombieri-Inoue and Inoue surfaces, by the secondary Kodaira surfaces, and by the Calabi-Eckmann structure on $\mathbb{S}^1\times\mathbb{S}^3$, see {\itshape e.g.} \cite{angella-dloussky-tomassini}.
\end{rem}

\end{document}